\newcommand{\mf}{\mathfrak}
\newcommand{\mc}{\mathcal}
\newcommand{\R}{\mathbf R}
\newcommand{\C}{\mathbf C}
\newcommand{\Q}{\mathbf Q}
\newcommand{\Z}{\mathbf Z}
\newcommand{\F}{\mathbf F}
\newcommand{\oo}{\mathcal{O}}
\newcommand{\Fix}{\textnormal{Fix}}
\numberwithin{equation}{section}
\theoremstyle{plain}
\newtheorem{theorem}{Theorem}[section]
\newtheorem{lemma}[theorem]{Lemma}
\newtheorem{corollary}[theorem]{Corollary}
\newtheorem{theorem*}{Theorem}
\newtheorem{lemma*}{Lemma}
\newtheorem{corollary*}{Corollary}
\newtheorem{proposition*}{Proposition}
\theoremstyle{definition}
\newtheorem{example}[theorem]{Example}
\begin{document}

\title{Value sets of bivariate Chebyshev maps over finite fields}

\author{\"{O}mer K\"{u}\c{c}\"{u}ksakall{\i}}
\address{Middle East Technical University, Mathematics Department, 06800 Ankara,
Turkey.}
\email{komer@metu.edu.tr}

\date{\today}

\begin{abstract}
We determine the cardinality of the value sets of bivariate Chebyshev maps over 
finite fields. We achieve this using the dynamical properties of these maps 
and the algebraic expressions of their fixed points in terms of roots of unity.
\end{abstract}

\subjclass[2010]{11T06}

\keywords{Chebyshev map; value set}

\maketitle

\section*{Introduction}
The Chebyshev polynomials show remarkable properties and they have applications 
in many areas of mathematics. There is a generalization of these polynomials to 
several variables introduced by Lidl and Wells \cite{lidlwells}. It is a well 
known fact that the Dickson polynomial, a normalization of one variable 
Chebyshev polynomial, induces a permutation on $\F_q$ if and only if 
$\gcd(k,q^s-1)=1$ for $s=1,2$. It is in perfect analogy with one variable case 
that the $n$ variable Chebyshev map is a bijection of $\F_q^n$ if and only if 
$\gcd(k,q^s-1)=1$ for $s=1,2,\ldots,n+1$. 

Let $f:\F_q^n\rightarrow\F_q^n$ be a polynomial map in $n$ variables defined 
over $\F_q$. Denote its value set by $V(f,\F_q^n)=\{f(c):c\in\F_q^n\}$. Clearly 
$f$ is a bijection of $\F_q^n$ if and only if its value set has cardinality 
$q^n$. If $f$ is not a bijection, then it is natural to ask how far it is away 
from being a bijection. There are several results in the literature which give 
bounds on the cardinality of the value set. We refer to the work of Mullen, Wan 
and Wang \cite{mullen} for a nice introduction to this problem which include 
several references and historical remarks.

For an arbitrary polynomial map, there is no easy formula giving the cardinality 
of the value set. However Chou, Gomez-Calderon and Mullen \cite{chou} achieve 
in finding such a formula for the Dickson polynomials. In our previous work 
\cite{kucuksakalli}, we gave a shorter proof of this formula by using a singular 
cubic curve and generalized those computations to the elliptic case. More 
precisely we have found the cardinality of the value sets of Latt\`{e}s maps, 
which are induced by isogenies of elliptic curves, over finite fields. 

In this paper we study the Chebyshev maps with two variables. The bivariate 
Chebyshev map $\mc{T}_k$ is given by the formula
\[ \mc{T}_k(x,y) = (g_k(x,y),g_k(y,x)) \]
where $g_k(x,y)$ is the generalized Chebyshev polynomial defined by Lidl 
and Wells \cite{lidlwells}. We have $g_{-1}(x,y)=y, g_{0}(x,y)=3 $ and 
$g_{1}(x,y)=x$ and these polynomials satisfy the recurrence relation
\[ g_k(x,y) = xg_{k-1}(x,y) - yg_{k-2}(x,y) + g_{k-3}(x,y). \]
The recurrence relation work in both ways and $g_{k}(x,y)$ is defined for all 
integers $k\in\Z$. Note that $g_k$ has integral coefficients and one can 
consider the map induced on finite fields. The main result of this paper is 
Theorem~\ref{main} which provides a formula for the cardinality of the 
value set 
\[V(\mc{T}_k,\F_q^2)=\{\mc{T}_k(x,y):(x,y)\in\F_q^2\}.\]
We achieve in finding such a formula by using the dynamical properties of 
bivariate Chebyshev maps over complex numbers which are studied by Uchimura 
\cite{uchimura}. Uchimura shows that the set of points with bounded orbits is a 
certain closed domain $S$ in $\C^2$. This set is enclosed by Steiner's 
hypocycloid, see Figure~\ref{fig:domains}. Moreover he shows that the 
number of periodic points of order $n$ is equal to $|k|^{2n}$ if $|k|\geq2$. 
Another tool for our computations is the nice expression of periodic points of 
$\mc{T}_k$. Algebraically each periodic point is a triple sum of roots of unity, 
a characterization due Koornwinder \cite{koornwinder}. We combine these facts 
together with the identity $\mc{T}_q(x,y) \equiv (x^q,y^q) \pmod{p}$.It follows 
that $q^2$ fixed points of $\mc{T}_q$ reduce 
to distinct elements in $\F_q^2$ modulo a certain prime ideal of a number field. 
This is the idea we have used in order to compute the size of the value sets for 
Latt\`{e}s maps \cite{kucuksakalli}. After characterizing the elements in 
$\F_q^2$ in a compatible fashion under the action of $\mc{T}_q$, determining the 
cardinality of $V(\mc{T}_k,\F_q^2)$ reduces to a combinatoric argument.

The organization of the paper is as follows: In the first section we 
give an alternative computation of the value set of Dickson polynomials in 
order to give the idea of our computations in the bivariate case. In the second 
section, we review some known facts about the dynamics of bivariate Chebyshev 
maps and classify the points which have bounded orbits. In the 
third section we focus on the periodic and preperiodic points of $\mc{T}_k$ 
over complex numbers and their algebraic expressions. In the last section, we 
find the cardinality of the value set of bivariate Chebyshev maps. We finish 
our paper by giving an example.

\section{Single variable case}
In this section, we will consider the Dickson polynomials, a normalization of 
one variable Chebyshev polynomials, and give an alternative computation of the 
cardinality of their value sets. This alternative computation will be a summary 
of the ideas that will be used in the rest of the paper. 

The family of Chebyshev polynomials (of the first kind) are defined by the 
recurrence relation $T_{k+1}(x) = 2xT_k(x) - T_{k-1}(x)$ where $T_0(x)=1$ and 
$T_1(x)=x$. One can normalize the Chebyshev polynomials by the relation 
$D_k(x)=2T(\frac{x}{2})$ to obtain the Dickson polynomials (of the first 
kind). This is done in order to cancel the repeating factors of two and 
consider the arithmetic over fields of characteristic two. The Dickson 
polynomials satisfy a similar recurrence relation $D_{k+1}(x) = xD_{k}(x) - 
D_{k-1}(x)$ where $D_0(x)=2$ and $D_1(x)=x$. 

The first few Chebyshev and Dickson polynomials are:
\begin{align*}
 T_0(x)&=1, & D_0(x) &=2,\\
 T_1(x)&=x, & D_1(x) &=x,\\
 T_2(x)&=2x^2-1, & D_2(x) &=x^2-2,\\
 T_3(x)&=4x^3-3x, & D_3(x) &=x^3-3x,\\
 T_4(x)&=8x^4-8x^2+1, & D_4(x) &=x^4-4x^2+2,\\
 T_5(x)&=16x^5-20x^3+5x, &D_5(x) &=x^5 - 5x^3 + 5x.
\end{align*}

The dynamics of Dickson polynomials $D_k(x)$ over complex numbers is 
well-understood. We refer to Silverman \cite{sil-dyn} for a nice summary of 
these results. In contrast with our terminology, Silverman uses the term 
Chebyshev polynomials for the family $D_k$. 

In order to emphasize the analogy between the one variable case and the two 
variable case, let us set 
\[\alpha(\sigma)=e^{2\pi i \sigma}+e^{-2\pi i \sigma}, \hspace{20pt} 
\sigma\in\R.\]
Let $k$ be a fixed integer with $k\geq 2$. It is a well-known fact that the 
Julia set of $D_k(x)$ is the closed interval $J=[-2,2]$ in $\C$. Observe that 
the Julia set can be given as $J=\{ \alpha(\sigma) : \sigma\in \R \}$. The 
(forward) orbit of $x$ under $D_k$ is the set $\oo(x)= \{D_k^n(x):n\geq0\}$ by 
definition. Observe that the interval $[-2,2]$ can also be obtained as the set 
of complex numbers $x$ whose orbits $\oo(x)$ are bounded sets. Note that any 
preperiodic or periodic point must be in the set $[-2,2]$ since their orbits 
have finitely many elements. 

The value sets of Dickson polynomials are first computed by Chou, 
Gomez-Calderon and Mullen \cite{chou}. In \cite{kucuksakalli}, we gave an 
alternative computation of their result by using a singular cubic curve. Now we 
will give another approach which is a summary of ideas that will be used in the 
rest of the paper.

The map $\alpha:\R/\Z \rightarrow [-2,2]$ is a two to one covering with two 
exceptional points. Namely the points $-2=\alpha(1/2)$ and $2=\alpha(0)$. The 
fixed points of $D_k$ satisfy the relation $D_k(x)=x$ by definition. Moreover 
$x=\alpha(\sigma)$ for some $\sigma\in\R$ and we have $D_k(\alpha(\sigma)) = 
\alpha(k\sigma)$. For real numbers $\sigma$ and $\tilde{\sigma}$, we have 
$\alpha(\sigma) = \alpha(\tilde{\sigma})$ if and only if $\{\sigma,1-\sigma\}$ 
and $\{\tilde{\sigma},1-\tilde{\sigma}\}$ are equal as subsets of $\R/\Z$. It 
follows from these observations that any fixed point of $D_k$ is of the form 
$x=\alpha(r)$ for some rational number $r$. Moreover if $r$ is written in its 
lowest terms, its denominator must be relatively prime to $k$. Using this 
characterization, we can write
\[\Fix(D_k,\C)=\left\{\alpha\left(\frac{a}{k-1}\right):a\in\Z\right\} \cup 
\left\{\alpha\left(\frac{a}{k+1}\right):a\in\Z\right\}.\]

Now let us count the elements in $\Fix(D_k,\C)$. Both sets in the above union
contain the element $2=\alpha(0)$. If $k$ is odd, then $-2$ is in their 
intersection as well. Other than these two elements, the above sets are disjoint 
since $\gcd(k-1,k+1)|2$. It follows easily that there are $k$ distinct elements 
in $\Fix(D_k,\C)$ by the inclusion and exclusion principle. 

Let $\F_q$ be a finite field of characteristic $p$. Consider the number field $K 
= \Q( \Fix(D_q,\C) )$ which is obtained by adjoining the fixed points of $D_q$ 
to the rational numbers. Let $\mf{p}$ be a prime ideal of $K$ lying over $p$. 
We have $D_q(x)\equiv x^q \pmod{p}$ and therefore the fixed points of $D_q$ 
reduced modulo $\mf{p}$ are the solutions of $x^q-x=0$. Thus each element of 
$\F_q$ is obtained by reducing a fixed point modulo $\mf{p}$. Since there are 
$q$ fixed points of $D_q$, we conclude that there is a one-to-one correspondence
\[\Fix(D_q,\C) \longleftrightarrow \F_q \]
which is obtained by the reduction modulo $\mf{p}$.

From this point on finding a formula for the size of the value set is 
straightforward. One can use the representations $\alpha(a/(q\pm1))$ of 
elements 
in $\Fix(D_q,\C)$ in order to analyze the value set of $D_k$. Applying the 
inclusion and exclusion principle, we find that
\[|V(D_k,\F_q)|=\frac{q-1}{2\gcd(k,q-1)}+\frac{q+1}{2\gcd(k,q+1)}+\eta(k,q).\]
Here $\eta(k,q)$ is a function which takes the values $0$ or $1/2$. More 
precisely, $\eta(k,q)=0$ if and only if $\gcd(k,q-1) \equiv \gcd(k,q+1) 
\pmod{2}$.

\section{Bivariate Chebyshev maps}
There is a generalization of Chebyshev maps to higher dimensions introduced by 
Lidl and Wells \cite{lidlwells}. For any integer $n$, the polynomial 
equation $z^2-nz+1=0$ has roots $y$ and $1/y$ in the complex numbers. If $y^k$ 
and $1/y^k$ are the roots of $z^2-n'z+1=0$ then $n'$ is also an integer, and 
it is a well known fact that $D_k(n)=n'$ where $D_k$ is the Dickson 
polynomial (of the first kind).

Lidl and Wells generalize this construction by considering a polynomial 
equation of degree $n+1$ with integral coefficients and with roots $t_1,t_2, 
\ldots, t_{n+1}$. Then they consider another polynomial equation with 
roots $t_1^k,t_2^k \ldots, t_{n+1}^k$. It turns out that there 
is a system of polynomials with integral coefficients which give the 
coefficients of the latter equation in terms of the coefficients of the former 
equation. They are called the generalized Chebyshev polynomials. For details 
see \cite{lidlwells}, or \cite{lidlnied}.

Now we focus on the bivariate case. Suppose that $x=t_1+t_2+t_3$ and $y=t_1t_2 + 
t_1t_3 + t_2t_3$ with $t_1t_2t_3=a$. We assume that $a=1$ for simplicity. 
According to the construction of Lidl and Wells, there exists a bivariate 
polynomial $g_k(x,y)$ with integer coefficients which maps $(x,y)$ to $t_1^k 
+ t_2^k + t_3^k$. Moreover it turns out that $g_k(y,x)$ is equal to $t_1^kt_2^k 
+ t_1^kt_3^k + t_2^kt_3^k$. It is easy to see that $g_{-1}(x,y)=y, g_0(x,y)=3$ 
and $g_1(x,y)=x$. Further this family satisfies the recurrence relation 
\[ g_k(x,y) = xg_{k-1}(x,y) - yg_{k-2}(x,y) + g_{k-3}(x,y). \]

The first few bivariate Chebyshev polynomials are:
\begin{align*}
 g_0(x,y)&=3,\\
 g_1(x,y)&=x,\\
 g_2(x,y)&=x^2-2y,\\
 g_3(x,y)&=x^3-3xy+3,\\
 g_4(x,y)&=x^4- 4x^2y + 2y^2+4x,\\
 g_5(x,y)&=x^5 - 5x^3y + 5xy^2 + 5x^2 - 5y.
 \end{align*}

As we have seen from the first section, the dynamical properties of Dickson 
polynomials play an important role in the analysis of the map induced over 
finite fields. Thus we start with reviewing some known facts about the 
bivariate Chebyshev map which is defined by
\[ \mc{T}_k(x,y) = (g_k(x,y),g_k(y,x)). \]

Dynamical properties of $\mc{T}_k$ on complex numbers are studied by Uchimura 
\cite{uchimura}. Uchimura shows that the map $\mc{T}_k$ admits an invariant 
plane $\{x=\bar{y}\}\subseteq\mathbf{C}^2$. The restriction of $\mc{T}_k$ to 
this plane is the polynomial considered by Koornwinder \cite{koornwinder}. With 
Koornwinder's notation, we have $\mc{T}_k(x,\bar{x}) = P_{(k,0)}^{-1/2} 
(x,\bar{x})$. The key property we get from Koornwinder's work is the nice action 
of Chebyshev maps on certain elements. Define
 \[ \alpha(\sigma,\tau)=e^{2\pi i\sigma}+e^{2\pi i\tau}+e^{2\pi i 
(-\sigma-\tau)}, \hspace{20pt} \sigma,\tau\in\R.\]
We have
\[ \mc{T}_k\left(\alpha(\sigma,\tau), \overline{\alpha(\sigma,\tau)} \right)= 
(\alpha(k\sigma,k\tau),\overline{\alpha(k\sigma,k\tau)}). \]

Let $k$ be a fixed integer with $|k|\geq2$. Uchimura shows that any point in 
$\C^2$, whose orbit under $\mc{T}_k$ is a bounded set, must be in $\{(x, 
\bar{x}): x\in S\}$ where
\[ S=\{\alpha(\sigma,\tau):\sigma,\tau\in\R\}. \]
Note that any periodic or preperiodic point must have a bounded orbit and 
therefore it must be in $\{(x, \bar{x}): x\in S\}$ too. If we write $x=u+vi$, 
then the set $S$ is a closed domain enclosed by Steiner's hypocycloid
\[ (u^2+v^2+9)^2+8(-u^3+3uv^2)-108=0. \]
The Steiner's hypocycloid is a simple closed curve which can also be  
parametrized by $\alpha(\sigma,\sigma)$ with $0 \leq \sigma \leq 1$. 
\begin{figure}[htbp]
    \centering
 \includegraphics[scale=0.5]{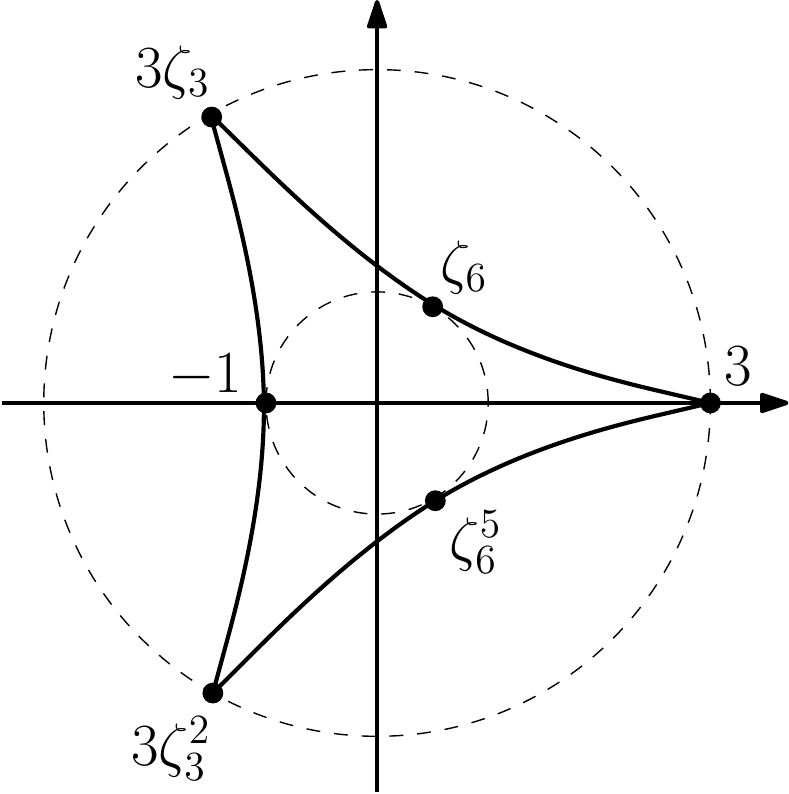}
    \caption{The domain $S$.}
    \label{fig:domains}
\end{figure}
There is a  symmetry under multiplication by a third root of unity because we 
have $\alpha(\sigma,\tau)\zeta_3 = \alpha(\sigma+1/3,\tau+1/3)$. 

Periodic points will play an important role in our computations. A periodic 
point must have a bounded orbit thus its coordinates are of the form 
$\alpha(\sigma,\tau)$ for some $\sigma,\tau\in\R$. The following lemma is the 
key to count the elements in the value sets of bivariate Chebyshev maps over 
finite fields.

\begin{lemma}\label{equal}
The complex numbers $\alpha(\sigma,\tau)$ and $\alpha(\tilde{\sigma} , 
\tilde{\tau})$ are equal if and only if $\{\sigma, \tau, -(\sigma+\tau)\}$ and 
$\{\tilde{\sigma}, \tilde{\tau}, - (\tilde{\sigma} + \tilde{\tau})\}$ are equal 
as subsets of $\R/\Z$.
\end{lemma}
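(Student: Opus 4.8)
The plan is to reduce both implications to one observation: the single complex number $\alpha(\sigma,\tau)$ already determines the monic cubic whose roots are the three unit-circle numbers $e^{2\pi i\sigma}$, $e^{2\pi i\tau}$, $e^{-2\pi i(\sigma+\tau)}$, so that passing between a coordinate and the associated multiset of angles is a bijection.

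First I would set $t_1=e^{2\pi i\sigma}$, $t_2=e^{2\pi i\tau}$, $t_3=e^{-2\pi i(\sigma+\tau)}$, so that $t_1t_2t_3=1$ and $t_1+t_2+t_3=\alpha(\sigma,\tau)=:\alpha$. Since each $t_j$ lies on the unit circle, $t_it_j=1/t_\ell=\overline{t_\ell}$ whenever $\{i,j,\ell\}=\{1,2,3\}$, so the second elementary symmetric function is $t_1t_2+t_1t_3+t_2t_3=\overline{t_1}+\overline{t_2}+\overline{t_3}=\overline{\alpha}$. Hence $t_1,t_2,t_3$ are exactly the roots, with multiplicity, of
\[ z^3-\alpha z^2+\overline{\alpha}\,z-1. \]
Because a monic polynomial over $\C$ is determined by its multiset of roots and vice versa, and because $\sigma\mapsto e^{2\pi i\sigma}$ is a bijection $\R/\Z\to\{|z|=1\}$, the multiset $\{\sigma,\tau,-(\sigma+\tau)\}$ in $\R/\Z$ is completely determined by $\alpha(\sigma,\tau)$, while conversely $\alpha(\sigma,\tau)$ is the sum over that multiset. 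Therefore $\alpha(\sigma,\tau)=\alpha(\tilde\sigma,\tilde\tau)$ holds if and only if $\{\sigma,\tau,-(\sigma+\tau)\}$ and $\{\tilde\sigma,\tilde\tau,-(\tilde\sigma+\tilde\tau)\}$ coincide \emph{as multisets}.

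It remains to upgrade ``as multisets'' to ``as subsets,'' which is the only delicate point. A subset $\{a,b\}$ of $\R/\Z$ with $a\neq b$ underlies two size-three multisets, $\{a,a,b\}$ and $\{a,b,b\}$; but both of the multisets above have total sum $\equiv 0$ in $\R/\Z$ by construction, and $2a+b=0$ together with $a+2b=0$ would force $a=b$, so the zero-sum condition singles out exactly one of the two. The singleton case is immediate. Thus, among size-three multisets of $\R/\Z$ with vanishing sum, the multiset is recovered from its underlying set, so ``equal as subsets'' is equivalent to ``equal as multisets'' here, and combining this with the previous paragraph proves both directions. I do not anticipate a genuine obstacle; the content is entirely in noticing that $|t_j|=1$ collapses $(e_1,e_2,e_3)$ to the single datum $\alpha$ (since $e_2=\overline{e_1}$ and $e_3=1$), plus the short bookkeeping that rules out two distinct zero-sum triples sharing an underlying set. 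One could instead argue directly that $\alpha(\sigma,\tau)=\alpha(\tilde\sigma,\tilde\tau)$ forces equality of the two displayed cubics and then read off the roots; this avoids the multiset/subset discussion but is the same argument in disguise.
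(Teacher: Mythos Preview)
Your proof is correct and takes a genuinely different route from the paper's. The paper argues geometrically: it introduces the tangent lines $\ell_\sigma$ to Steiner's hypocycloid, shows via an explicit trigonometric computation that the image $\alpha(L_\sigma)$ of a vertical line is the chord $\ell_\sigma\cap S$, and concludes that the three lines $\ell_\sigma,\ell_\tau,\ell_{-(\sigma+\tau)}$ meet at $\alpha(\sigma,\tau)$; since each interior point of $S$ lies on exactly three of the $\ell_\sigma$'s, this sets up the bijection with three-element subsets of $\R/\Z$, with boundary and corner points handled separately. Your argument is purely algebraic: the observation $|t_j|=1$ forces the second elementary symmetric function to be $\overline{\alpha}$, so the single number $\alpha$ already encodes the cubic $z^3-\alpha z^2+\overline{\alpha}z-1$ and hence the multiset of roots; your short zero-sum bookkeeping then shows that ``multiset'' and ``subset'' coincide here. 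Your approach is shorter, avoids any case split on interior versus boundary, and would generalize immediately to the $n$-variable Chebyshev setting (roots of $z^{n+1}-x_1z^n+\cdots\pm1$ on the unit circle). The paper's approach, on the other hand, yields the tangent-line picture of $S$ that it uses for illustration and intuition elsewhere, and makes the $6$-to-$1$, $3$-to-$1$, $1$-to-$1$ stratification of $\alpha$ visually transparent.
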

\begin{proof}
To understand the representations of elements in $S$ in terms of 
$\alpha(\sigma,\tau)$, a useful idea is to consider the tangent lines to 
the hypocycloid 
\[C=\{\alpha(\sigma,\sigma):\sigma\in [0,1]\}.\]
Define $\ell_\sigma$ to be the line passing through the point 
$\alpha(\sigma,\sigma)$ with slope $-\tan(\pi\sigma)$. If $\sigma\in 1/2+\Z$, 
then set $\ell_\sigma$ as the vertical line $\mf{Re}(z)=-1$. Note that the 
lines 
$\ell_\sigma$ are distinct for $\sigma\in [0,1)$.

The subset $\{\sigma, \tau, -(\sigma+\tau)\}$ of $\R/\Z$ has one element if 
$\alpha(\sigma,\tau)$ is one of the three corner points of $C$. It has two 
elements if $\alpha(\sigma,\tau)$ is on $C$ but not a corner point. The lemma 
is trivially true in these cases.

We assume that $\alpha(\sigma,\tau)$ is an interior point of $S$. In this case 
the subset $\{\sigma, \tau, -(\sigma+\tau)\}$ of $\R/\Z$ has three elements 
and the lines $\ell_\sigma$, $\ell_\tau$ and $\ell_{-(\sigma+\tau)}$ are 
pairwise distinct. Observe that each interior point of $S$ is realized 
precisely 
three times by the lines $\ell_\sigma$ as $\sigma$ varies on the interval 
$[0,1)$. We will show that the lines $\ell_\sigma$, $\ell_\tau$ and 
$\ell_{-(\sigma+\tau)}$ intersect at $\alpha(\sigma,\tau)$. This geometric 
result will finish the proof because it gives a one-to-one correspondence 
between the interior points of $S$ and the subsets of $\R/\Z$ with three 
elements.

Consider $L_\sigma=\{ (\sigma,t):t\in\R \}$, a vertical line in $\R^2$. We 
claim that $\alpha$, regarded as a map from $\R^2$ to $\C$, maps $L_\sigma$ to 
the line segment $\ell_\sigma \cap S$. It follows that $\alpha(\sigma,\tau)$ 
lies on $\ell_\sigma$. By symmetry $\alpha(\sigma,\tau)$ lies on $\ell_\tau$, 
too. Therefore the lines $\ell_\sigma$ and $\ell_\tau$ intersects at 
$\alpha(\sigma,\tau)$. It is clear that $\ell_{-(\sigma+\tau)}$ passes through 
the same point because $\alpha(\sigma,\tau)=\alpha(\sigma,-(\sigma+\tau))$. To 
justify the claim $\alpha(L_\sigma) = \ell_\sigma \cap S$, we start with
\[ \alpha(L_\sigma) = \{ \zeta^{2 \pi i\sigma}+\zeta^{2\pi it}+\zeta^{-2\pi 
i(\sigma+t)}:t\in \R \}.\]
The parametric curve $\alpha(L_\sigma)$ in $\C$ has the following components:
\begin{align*}
 f(t)=\mf{Re}(\alpha(L_\sigma)) &= \cos(2\pi \sigma)+\cos(2\pi t) + 
\cos(2\pi(\sigma+t))\\
g(t)=\mf{Im}(\alpha(L_\sigma)) &= \sin(2\pi \sigma)+\sin(2\pi t) - 
\sin(2\pi(\sigma+t)).
\end{align*}
The slope of the tangent line to the curve  $\alpha(L_\sigma)$ at any point 
$\alpha(\sigma,t)$ is given by $m=g'(t)/f'(t)$ provided that $f'(t)\neq 
0$. We have
\begin{align*}
 f'(t)/(2\pi)&=-\sin(2\pi t)-\sin(2\pi(\sigma+t)=-2\sin(\pi(\sigma 
+ 2t))\cos(\pi \sigma)\\
 g'(t)/(2\pi)&=\cos(2\pi t)-\cos(2\pi(\sigma+t))=2\sin(\pi(\sigma + 
2t))\sin(\pi \sigma).
\end{align*}
Here, the second equalities follow from the sum to product formulas for the 
trigonometric functions. Thus, $m=-\tan(\pi \sigma)$. This computation shows 
that $L_\sigma$ is mapped under $\alpha$ to a line segment with slope 
$-\tan(\pi 
\sigma)$. If $\sigma\in 1/2+\Z$, then it is mapped to the vertical line 
$\mf{Re}(z)=-1$. Moreover, $\alpha(L_\sigma)$ is a line segment with end 
points lying on the hypocycloid $C$. To see this, note that the functions 
$f(t)$ 
and $g(t)$ have common critical values if and only if $\sin( \pi(\sigma+2t) 
)=0$. This is possible only at $t=1/2-\sigma/2$ and $t=1-\sigma/2$ for those 
$t\in[0,1)$. The points corresponding to these two $t$ values are on the 
hypocycloid $C$.
\end{proof}

As a final observation note that the restricted map $\alpha:[0,1)\times[0,1) 
\rightarrow S$ is a six to one map unless $\alpha(\sigma,\tau)$ is on the 
boundary $C=\{\alpha(\sigma,\sigma):\sigma\in \R\}$. A point on $C$ which is 
not a corner point can be represented in three different ways and the corner 
points $\alpha(0,0),\alpha(1/3,1/3)$ and $\alpha(2/3,2/3)$ can be represented 
uniquely. We finish this section with an illustration of the lines $\ell_\sigma$ 
within the proof of Lemma\ref{equal}.
\begin{example}
The point $-1+\sqrt{-3}\in S$ can be represented by any of the 
following six expressions: $\alpha(1/6,1/3), \alpha(1/3,1/6), \alpha(1/6,1/2), 
\alpha(1/2,1/6), \alpha(1/2,1/3)$ and $\alpha(1/3,1/2)$. Note that the lines 
$\ell_{1/6}, \ell_{1/3}$ and $\ell_{1/2}$ intersect at the same point, namely 
$-1+\sqrt{-3}$. See Figure~\ref{fig:tangency}.
\end{example}
\begin{figure}[htbp]
    \centering
\includegraphics[scale=0.4]{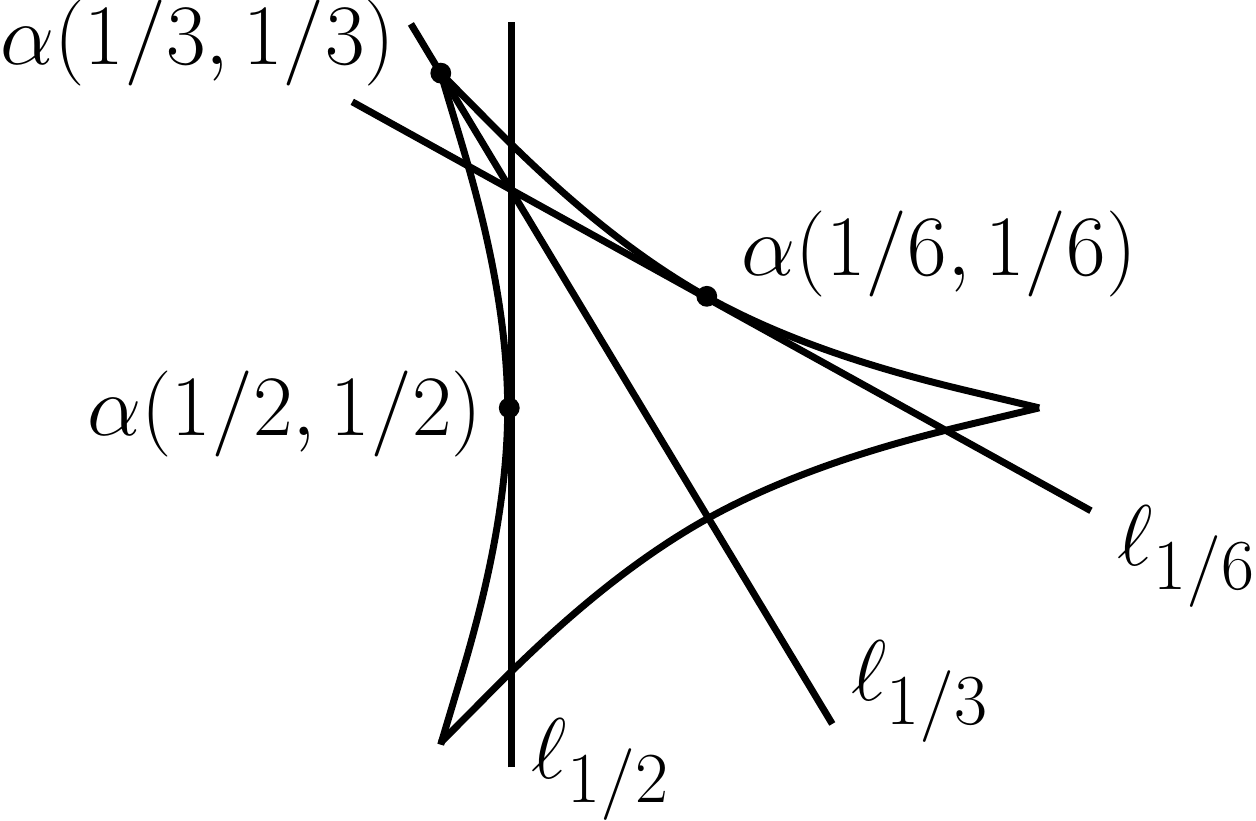}
    \caption{There lines meeting at $-1+\sqrt{-3}$.}
    \label{fig:tangency}
\end{figure}

\section{Periodic and preperiodic points}

The family of Dickson polynomials $D_k(x)$ has very explicit dynamical 
properties. For example a point with bounded orbit must lie in the interval 
$[-2,2]$ in $\C$. Moreover a point $x$ is preperiodic if and only if 
$x=2\cos(2\pi r)$ for some rational number $r$.

We want to classify all periodic and preperiodic points of $\mc{T}_k$. The 
cases $k=-1$ $k=0$ and $k=1$ are trivial so we assume that $|k| \geq 2$.  All 
points with bounded orbit lie in $\{(x,\bar{x}):x\in S\}$ and their coordinates 
are of the form $\alpha(\sigma,\tau)$ for some real numbers $\sigma$ and 
$\tau$. Periodic and preperiodic points have bounded orbits since there are 
finitely many elements in their orbits. Thus their coordinates are given by 
$\alpha(\sigma,\tau)$. Moreover we have the following

\begin{lemma}
Let $k$ be a fixed integer with $|k|\geq 2$. A point $(x,y)\in\C^2$ is a 
preperiodic point of $\mc{T}_k$ if and only if there exist rational 
numbers $r,s\in\Q$ such that $x=\alpha(r,s)$ and $y=\bar{x}=\alpha(-r,-s)$. 
Moreover if $r$ and $s$ are written in their lowest terms then 
$(\alpha(r,s),\alpha(-r,-s))$ is a periodic point of $\mc{T}_k$ if and only if 
the denominators of $r$ and $s$ are both relatively prime to $k$.
\end{lemma}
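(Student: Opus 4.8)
The plan is to use the conjugation formula $\mc{T}_k(\alpha(\sigma,\tau),\overline{\alpha(\sigma,\tau)}) = (\alpha(k\sigma,k\tau),\overline{\alpha(k\sigma,k\tau)})$ together with Lemma~\ref{equal} to translate the orbit structure of $\mc{T}_k$ on $S$ into the arithmetic of the $\Z[1/k]$-module $(\R/\Z)^2$. First I would note that since every point with bounded orbit lies in $\{(x,\bar x):x\in S\}$, any preperiodic point can be written as $(\alpha(\sigma,\tau),\overline{\alpha(\sigma,\tau)})$ for some $\sigma,\tau\in\R$. Applying $\mc{T}_k$ repeatedly, the orbit is determined by the sequence of pairs $(k^n\sigma,k^n\tau)\in(\R/\Z)^2$, but one must be careful: by Lemma~\ref{equal} two parameter pairs give the same point precisely when the associated unordered triples $\{\sigma,\tau,-(\sigma+\tau)\}$ and $\{\tilde\sigma,\tilde\tau,-(\tilde\sigma+\tilde\tau)\}$ coincide in $\R/\Z$. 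So the relevant object is not the pair itself but its orbit under the symmetric group $S_3$ acting on such triples (equivalently, the image in $(\R/\Z)^2/S_3$, or one may work with the three coordinates on the plane $x_1+x_2+x_3=0$ in $(\R/\Z)^3$ permuted by $S_3$, which is cleaner).

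The forward direction: suppose $(x,y)$ is preperiodic. Then there are $m>n\geq 0$ with $\mc{T}_k^m(x,y)=\mc{T}_k^n(x,y)$, hence $\alpha(k^m\sigma,k^m\tau)=\alpha(k^n\sigma,k^n\tau)$. By Lemma~\ref{equal} there is a permutation in $S_3$ carrying the triple $(k^m\sigma,k^m\tau,-k^m(\sigma+\tau))$ to $(k^n\sigma,k^n\tau,-k^n(\sigma+\tau))$ in $(\R/\Z)^3$. Composing with the linear map "multiply by $k^n$" (which commutes with the $S_3$-action), after replacing $(\sigma,\tau)$ by $(k^n\sigma,k^n\tau)$ we may assume $n=0$, so $P:=(\sigma,\tau,-(\sigma+\tau))\in(\R/\Z)^3$ satisfies $k^m P = \pi\cdot P$ for some permutation $\pi\in S_3$. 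Raising to a power, replace $m$ by a multiple so that $\pi^j=\mathrm{id}$, giving $k^{mj}P = P$ in $(\R/\Z)^3$, i.e. $(k^{mj}-1)P=0$. Since $|k|\geq 2$ we have $k^{mj}-1\neq 0$, so each coordinate of $P$ has finite order in $\R/\Z$, hence is rational; in particular $\sigma=r$, $\tau=s$ with $r,s\in\Q$, and then $y=\overline{\alpha(r,s)}=\alpha(-r,-s)$ as claimed.

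The reverse direction is immediate: if $r,s\in\Q$ then the set $\{(\alpha(k^n r,k^n s),\overline{\cdots}):n\geq 0\}$ is finite because $(k^n r,k^n s)$ ranges over the finite group $\frac1N\Z/\Z\times\frac1N\Z/\Z$ where $N$ is a common denominator, so $(x,y)$ is preperiodic. For the "moreover" clause about periodicity, I would characterize periodicity as: $(\alpha(r,s),\alpha(-r,-s))$ is periodic iff there exists $m\geq 1$ with $k^m P = P$ in $(\R/\Z)^3$ up to an $S_3$-permutation, which as above can be upgraded to $k^m P = P$ on the nose for a suitable larger $m$. Writing $r=a/d$, $s=b/e$ in lowest terms and letting $N=\mathrm{lcm}(d,e)$ be the exact order of $P$ in $(\R/\Z)^3$ (one should check the order of the triple equals $N$), the condition $k^m P = P$ for some $m\geq 1$ is equivalent to $k$ being invertible modulo $N$, i.e. $\gcd(k,N)=1$, i.e. $\gcd(k,d)=\gcd(k,e)=1$. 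Conversely if some prime $\ell\mid\gcd(k,d)$ (say), then the $\ell$-part of the order of $P$ strictly increases under multiplication by $k$ — more precisely the orbit $\{k^nP\}$ contains points of unbounded $\ell$-power order is false, rather: the orbit can never return to $P$ because $k^nP$ has $\ell$-adic order of denominator strictly smaller than that of $P$ for $n\geq 1$ when $\ell\mid k$, while $P$ itself has larger $\ell$-power in its order; so $(x,y)$ is preperiodic but not periodic. The main obstacle I anticipate is the bookkeeping around the $S_3$-ambiguity from Lemma~\ref{equal}: one has to make sure that replacing "$k^mP=\pi P$" by a genuine equality (after enlarging $m$) is legitimate and that the order of the triple $P$ — not of the individual pair $(r,s)$ — is what governs the $\gcd$ condition; handling the boundary/corner cases of $S$, where the triple degenerates to two or one elements, must also be checked separately, though there the statement is easy.
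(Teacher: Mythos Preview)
Your approach is essentially the same as the paper's: both reduce via Lemma~\ref{equal} to the arithmetic of the triple $(\sigma,\tau,-(\sigma+\tau))$ in $(\R/\Z)^3$ under multiplication by $k$, and both exploit the finiteness of the $S_3$-ambiguity to force rationality. The only difference is in execution: the paper picks one of the six permutation cases and derives an explicit relation such as $(k^{2l-n}+k^l+k^n)\sigma\equiv 0\pmod{\Z}$ by hand (declaring the remaining cases similar), whereas you absorb the permutation uniformly by iterating to $k^{mj}P=P$ --- a cleaner device but the same idea; for the periodicity clause the two arguments coincide almost verbatim (the paper uses $k^6r\equiv r\pmod{\Z}$, which is exactly your ``raise to the $|S_3|$-th power'' step).
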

\begin{proof}
Suppose that $x=\alpha(r,s)$ where $r$ and $s$ are rational numbers. Then it is 
easy to see that $(x,\bar{x})$ is a preperiodic point of $\mc{T}_k$. For the 
converse, let $\alpha(\sigma,\tau)$ be a preperiodic point under $\mc{T}_k$ with 
$|k| \geq 2$. It follows that 
$$\alpha(k^n\sigma,k^n\tau) = \alpha(k^l\sigma,k^l\tau)$$ 
for some positive integers $n\leq l$. This is possible if and only if 
\[\{k^n\sigma, k^n\tau, -k^n(\sigma+\tau)\} = \{k^l\sigma, k^l\tau, 
-k^l(\sigma+\tau) \}\]
as subsets of $\R/\Z$ by Lemma~\ref{equal}. There are six possibilities. We 
will prove only one case. The proofs for the other cases are similar. Suppose 
that we have 
\begin{align*}
 k^n\sigma & \equiv k^l\tau \pmod{\Z}, \\
 k^n\tau & \equiv -k^l(\sigma+\tau) \pmod{\Z}.
\end{align*}
We omit the third equation since it can be obtained from these two. Starting 
with the former equation and then using the latter equation, we obtain
\[k^n\sigma \equiv k^l\tau \equiv k^{l-n}k^n\tau \equiv 
k^{l-n}[-k^l(\sigma+\tau)] \pmod{\Z}.\] 
Now we plug in the first equation again and get
\[k^n\sigma \equiv k^{l-n}[-k^l\sigma-k^n\sigma] \pmod{\Z}. \]
Therefore
\[ k^n\sigma+k^{2l-n}\sigma+k^l\sigma\equiv 0 \pmod{\Z}. \]
From this congruence, we see that $\sigma$ is a rational number with 
denominator $k^{2l-n}+k^l+k^n$. Since $k^n\sigma \equiv k^l\tau \pmod{\Z}$, the 
number $\tau$ must be rational too.

Now suppose that $(x,y)$ is a periodic point under $\mc{T}_k$. Then there exist 
rational numbers $r=a/b$ and $s=c/d$ for some integers $a,b,c$ and $d$ such that 
$x=\alpha(r,s)$ and $y=\alpha(-r,-s)$. Suppose that $r$ and $s$ are written in 
their lowest terms, i.e. $\gcd(a,b)=1$ and $\gcd(c,d)=1$. Without loss of 
generality we can assume that $(x,y)$ is fixed by $\mc{T}_k$. The general result 
will follow from the identity $\mc{T}_k\circ \mc{T}_m = \mc{T}_{km}$ which is 
valid on $\{ (x,\bar{x}):x\in S \}$. It follows by Lemma~\ref{equal} that the 
set $\{r,s,-(r+s)\}$ modulo $\Z$ is permuted under multiplication by $k$. Thus 
$r\equiv k^6r \pmod{\Z}$ and therefore $r(k^6-1)\equiv 0 \pmod{\Z}$. From this 
we conclude that the denominator of $r$ is relatively prime to $k$ since it is 
a 
divisor of $k^6-1$. The same result holds for $s$ as well. To see the converse 
let $f$ be order of $k$ modulo the least common multiple of denominators of $r$ 
and $s$. Then $\mc{T}_k^f$ fixes the point $(x,y)$ and therefore it is an 
$f$-periodic point of $\mc{T}_k$.
\end{proof}

Now we want to describe the set of points in $\C$ which are fixed under 
$\mc{T}_k$. Consider the following sets for $|k|\geq 2$:
\begin{align*}
 \mc{A}_k & =\left\{\alpha\left( \frac{d}{k-1}, \frac{e}{k-1} 
\right):d,e\in\Z \right\},\\ 
\mc{B}_k & =\left\{\alpha\left( \frac{d}{k^2-1}, \frac{dk}{k^2-1} 
\right):d\in\Z \right\},\\
\mc{C}_k & =\left\{\alpha\left( \frac{d}{k^2+k+1} , \frac{dk}{k^2+k+1} 
\right):d\in\Z \right\}.
\end{align*}
It is obvious that $\Fix(\mc{T}_k,\C^2) \supseteq \{ (x, \bar{x}): 
x\in \mc{A}(k) \cup \mc{B}(k) 
\cup \mc{C}(k)\}$. The converse inclusion is also true.

\begin{theorem}\label{fixed}
 Let $k$ be a fixed integer with $|k|\geq 2$. Then
 \[ \Fix(\mc{T}_k,\C^2) = \{ (x, \bar{x}): x\in 
\mc{A}_k\cup\mc{B}_k\cup\mc{C}_k \}. \]
\end{theorem}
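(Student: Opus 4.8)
The reverse inclusion $\{(x,\bar x): x\in\mc{A}_k\cup\mc{B}_k\cup\mc{C}_k\}\subseteq\Fix(\mc{T}_k,\C^2)$ — already noted before the statement — is a direct computation with the displayed identity $\mc{T}_k(\alpha(\sigma,\tau),\overline{\alpha(\sigma,\tau)})=(\alpha(k\sigma,k\tau),\overline{\alpha(k\sigma,k\tau)})$; for instance if $\sigma=d/(k-1)$ and $\tau=e/(k-1)$ then $\alpha(k\sigma,k\tau)=\alpha(\sigma+d,\tau+e)=\alpha(\sigma,\tau)$, and the cases $\mc{B}_k,\mc{C}_k$ follow similarly using Lemma~\ref{equal}. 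So the content of the theorem is the inclusion $\subseteq$, and that is what I would prove. Let $(x,y)$ be fixed by $\mc{T}_k$. A fixed point has bounded orbit, so by the facts recalled at the beginning of this section $y=\bar x$ and $x=\alpha(\sigma,\tau)$ for some $\sigma,\tau\in\R$. The equation $\mc{T}_k(x,\bar x)=(x,\bar x)$ reads $\alpha(k\sigma,k\tau)=\alpha(\sigma,\tau)$, which by Lemma~\ref{equal} is equivalent to the equality
\[ \{k\sigma,\ k\tau,\ -k(\sigma+\tau)\} = \{\sigma,\ \tau,\ -(\sigma+\tau)\} \]
as subsets of $\R/\Z$. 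Writing $T$ for this common set, multiplication by $k$ is a surjection of the finite set $T$ onto itself, hence a permutation of $T$.

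The main case is $|T|=3$, and here I would argue by the cycle type of this permutation; since $\alpha(\sigma,\tau)$ depends only on the set $T$, I may relabel its three elements as $t_1,t_2,t_3$, so there are exactly three cases to treat. If the permutation is the identity, then $(k-1)t_i\equiv 0\pmod{\Z}$ for each $i$, so $\sigma,\tau$ have denominators dividing $k-1$ and $x\in\mc{A}_k$. If it is a transposition, say fixing $t_3$ and interchanging $t_1,t_2$, then $t_2\equiv kt_1$, $(k-1)t_3\equiv 0$, and the relation $t_1+t_2+t_3\equiv 0$ — automatic since $-(\sigma+\tau)\in T$ — gives $t_3\equiv-(k+1)t_1$, whence $(k^2-1)t_1\equiv-(k-1)t_3\equiv 0$; thus $t_1=d/(k^2-1)$ and $t_2\equiv dk/(k^2-1)$ for some $d\in\Z$, and $x=\alpha(t_1,t_2)\in\mc{B}_k$. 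If it is a $3$-cycle, say $t_1\mapsto t_2\mapsto t_3\mapsto t_1$, then $t_2\equiv kt_1$ and $t_3\equiv k^2t_1$, and substituting these into $t_1+t_2+t_3\equiv 0$ yields $(k^2+k+1)t_1\equiv 0$; hence $t_1=d/(k^2+k+1)$, $t_2\equiv dk/(k^2+k+1)$, and $x\in\mc{C}_k$.

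It remains to dispose of the degenerate configurations. If $|T|=1$, then $3\sigma\equiv 0$ and $\tau=\sigma$, so $x$ is one of the three corner points of $S$; being fixed forces $(k-1)\sigma\equiv 0$, so $x=\alpha(d/(k-1),d/(k-1))\in\mc{A}_k$. If $|T|=2$, then after relabelling $\sigma,\tau,-(\sigma+\tau)$ we may write $T=\{\rho,-2\rho\}$ with $3\rho\not\equiv 0$, and either $k$ fixes both elements of $T$, giving $(k-1)\rho\equiv 0$ and $x\in\mc{A}_k$ as above, or $k$ interchanges them, forcing both $(k+2)\rho\equiv 0$ and $(2k+1)\rho\equiv 0$; since $\gcd(k+2,2k+1)$ divides $3$ this would give $3\rho\equiv 0$, contradicting $|T|=2$, so this subcase cannot occur. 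With every configuration accounted for, each fixed point lies in $\{(x,\bar x):x\in\mc{A}_k\cup\mc{B}_k\cup\mc{C}_k\}$. The step that needs the most care is the $3$-cycle case: one must feed the built-in relation $t_1+t_2+t_3\equiv 0$ into $t_j\equiv k^{j-1}t_1$, which replaces the naive conclusion $(k^3-1)t_1\equiv 0$ by the sharper $(k^2+k+1)t_1\equiv 0$ — precisely what is needed to land in $\mc{C}_k$ rather than in a larger set — and one must check that the six set equalities permitted by Lemma~\ref{equal} really do collapse to these three permutation types.
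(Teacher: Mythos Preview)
Your proof is correct, but it takes a genuinely different route from the paper's. The paper does \emph{not} argue directly that every fixed point falls into $\mc{A}_k\cup\mc{B}_k\cup\mc{C}_k$. Instead, it accepts the reverse inclusion as obvious and then proves equality by a pure counting argument: it computes $|\mc{A}_k|$, $|\mc{B}_k|$, $|\mc{C}_k|$ and all their pairwise and triple intersections, applies inclusion--exclusion to obtain $|\mc{A}_k\cup\mc{B}_k\cup\mc{C}_k|=k^2$, and then invokes Uchimura's result that $\mc{T}_k$ has exactly $k^2$ fixed points to force the two sets to coincide.

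Your approach is more self-contained: by analyzing the cycle type of the permutation that multiplication by $k$ induces on $T=\{\sigma,\tau,-(\sigma+\tau)\}$, you show the forward inclusion directly without appealing to the external fixed-point count. This has the pleasant side effect of explaining \emph{why} there are three families --- one per conjugacy class in $S_3$ --- and, combined with the paper's inclusion--exclusion computation, it yields an independent proof that $|\Fix(\mc{T}_k,\C^2)|=k^2$. The paper's approach, on the other hand, is not wasted work: the individual cardinalities $|\mc{A}_k|,|\mc{B}_k|,|\mc{C}_k|$ and their intersections are exactly the ingredients recycled in the proof of Theorem~\ref{main}, so the counting serves double duty there.
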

\begin{proof}
It is enough to show that the union $\mc{A}_k\cup\mc{B}_k\cup\mc{C}_k$ has $k^2$ 
elements. In order to do this we will apply the inclusion and exclusion 
principle. 

We start with counting the elements in $\mc{A}_k$. The set $\mc{A}_k$ have 
elements of the form $\alpha(d/(k-1) , e/(k-1))$. It is enough to consider $0 
\leq d,e \leq k-2$ because of the periodicity. There are $(k-1)^2$ such pairs of 
$(d,e)$. These pairs do not result in distinct elements because there are some 
repetitions. A fixed point will be represented six times among these pairs 
unless $d=e$. An element of the form $\alpha(d/(k-1), d/(k-1))$ will be 
represented three times unless $3d\equiv0\pmod{k-1}$. Moreover three corner 
points $\alpha(0,0),\alpha(1/3,1/3)$ and $\alpha(2/3,2/3)$ are in $\mc{A}_k$ if 
$3|k-1$. If $3 \nmid k-1$, then only $\alpha(0,0)$ is in $\mc{A}_k$ among the 
corner points. Thus
\[ |\mc{A}_k| = \frac{(k-1)^2+3(k-1)+2\gcd(k-1,3)}{6}. \]

The set $\mc{B}_k$ have elements of the form $\alpha(d/(k^2-1) , dk/(k^2-1))$. 
It is enough to consider $0 \leq d \leq k^2-2$ because of the periodicity. A 
fixed point will be represented two times among these values unless $d$ is a 
multiple of $k+1$. In that case the representation will be unique. Therefore
\[ |\mc{B}_k| = \frac{(k^2-1)+(k-1)}{2} \]

The set $\mc{C}_k$ consists of elements of the form $\alpha(d/(k^2+k+1) , 
dk/(k^2+k+1))$ with $0 \leq d \leq k^2+k$. Every element is represented three 
times in this case unless $k-1$ is divisible by $3$. Thus
\[ |\mc{C}_k| = \frac{(k^2+k+1)+2\gcd(k-1,3)}{3} \]

Now we consider the intersections. We start with $\mc{A}_k\cap 
\mc{B}_k$. An element of the form $\alpha(d/(k^2-1) , dk/(k^2-1))$ is in 
$\mc{A}_k$ if and only if $d$ is a multiple of $k+1$. There are $k-1$ such 
integers among $\{0,1,2,\ldots,k^2-2\}$, each of which is represented uniquely. 
As a result $|\mc{A}_k\cap\mc{B}_k|=k-1$. The set $\mc{A}_k\cap\mc{C}_k$ may 
only have elements $\alpha(0,0), \alpha(1/3,1/3)$ or $\alpha(2/3,2/3)$ since 
$\gcd(k-1,k^2+k+1)$ divides $3$. Thus $|\mc{A}_k\cap\mc{C}_k|=\gcd(k-1,3)$. 
Similarly $|\mc{B}_k \cap \mc{C}_k| = \gcd(k-1,3)$, and $|\mc{A}_k \cap 
\mc{B}_k 
\cap \mc{C}_k| = \gcd(k-1,3)$. Now it is trivial to verify that $|\mc{A}_k \cup 
\mc{B}_k\cup\mc{C}_k|=k^2$ by applying the inclusion and exclusion principle.
\end{proof}

\section{Value sets over finite fields}
It is a well known fact that the Dickson polynomial $D_k(x)$ induces a 
permutation on $\F_q$ if and only if $\gcd(k,q^s-1)=$ for $s=1,2$. It is in 
perfect analogy with one variable case that the $n$ variable Chebyshev map is a 
bijection of $\F_q^n$ if and only if $\gcd(k,q^s-1)=1$ for $s=1,2,\ldots,n+1$ 
\cite{lidlwells}. In this section we compute the cardinality of 
$V(\mc{T}_k,\F_q^2)$. As a corollary, we recover the result of Lidl and 
Wells in the case $n=2$.

The coefficients of the Dickson polynomials $D_k(x)$ can be computed using the 
following formula:
\[ D_k(x) = \sum_{i=0}^{\lfloor k/2\rfloor} \frac{k(-1)^i}{k-i}\binom{k-i}{i}
 x^{k-2i}.\]
Let $q$ be a power of a prime $p$. It is easily verified using this formula 
that $D_q(x)\equiv x^q \pmod{p}$. Lidl and Wells provide a similar formula for 
the bivariate Chebyshev polynomials \cite[p.~110]{lidlwells}. We have
\[ g_k(x,y) = \sum_{i=0}^{\lfloor k/2\rfloor} \sum_{j=0}^{\lfloor k/3\rfloor} 
\frac{k(-1)^i}{k-i-2j}\binom{k-i-2j}{i+j}  \binom{i+j}{i}x^{k-2i-3j}y^i \]
where only those terms occur for which $k\geq 2i+3j$. Recall that 
$\mc{T}_k(x,y) 
= (g_k(x,y),g_k(y,x))$. It is clear by this formula that 
\[\mc{T}_q(x,y) \equiv (x^q,y^q) \pmod{p}.\]
This congruence enables us to observe that the elements in $\F_q^2$ can be 
obtained by reducing the elements of $\Fix(\mc{T}_q,\C^2)$ modulo a certain 
prime ideal. Because there are precisely $q^2$ fixed points of $\mc{T}_q$, we 
obtain the following lemma.
\begin{lemma}
Let $\F_q$ be a finite field of characteristic $p$. Consider the number field 
$K=\Q(\Fix(\mc{T}_q,\C^2))$ which is obtained by adjoining the coordinates of 
fixed points of $\mc{T}_q$ to the rational numbers. Let $\mf{P}$ be a prime 
ideal of $K$ lying over $p$. Then there exists a one-to-one correspondence
\[ \Fix(\mc{T}_q,\C^2) \longleftrightarrow \F_q^2 \]
which is given by the reduction modulo $\mf{P}$.
\end{lemma}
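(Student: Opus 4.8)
The plan is to realize the correspondence as the reduction-modulo-$\mf{P}$ map and then verify it is a bijection. By Theorem~\ref{fixed} (applied with $k=q$, which is legitimate since $q\geq 2$) the set $\Fix(\mc{T}_q,\C^2)$ has exactly $q^2$ elements, and $|\F_q^2|=q^2$, so it is enough to show that reduction modulo $\mf{P}$ maps $\Fix(\mc{T}_q,\C^2)$ \emph{into} $\F_q^2$ and does so \emph{injectively}; bijectivity (in particular surjectivity) then follows from the cardinality count, which is convenient because a direct surjectivity argument would have to keep track of how $\mf{P}$ extends to a cyclotomic field. I would first note that, by the explicit description in Theorem~\ref{fixed}, each coordinate of a fixed point is a sum of three roots of unity, hence an algebraic integer; so $\Fix(\mc{T}_q,\C^2)\subseteq\mathcal{O}_K^2$ and reduction modulo $\mf{P}$ makes sense on it.

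Next I would check the containment in $\F_q^2$. Given $(x_0,y_0)\in\Fix(\mc{T}_q,\C^2)$ we have $g_q(x_0,y_0)=x_0$ and $g_q(y_0,x_0)=y_0$; reducing these two equations modulo $\mf{P}$ and using the congruence $\mc{T}_q(x,y)\equiv(x^q,y^q)\pmod p$, i.e.\ $g_q(X,Y)\equiv X^q$ as polynomials over $\F_p$, gives $\overline{x_0}^{\,q}=\overline{x_0}$ and $\overline{y_0}^{\,q}=\overline{y_0}$ in $\mathcal{O}_K/\mf{P}$, so $\overline{x_0},\overline{y_0}$ are roots of $T^q-T$ and therefore lie in $\F_q$. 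For injectivity the key move is to repackage a fixed point as a monic cubic: by Theorem~\ref{fixed} each element of $\Fix(\mc{T}_q,\C^2)$ has the form $(\zeta_1+\zeta_2+\zeta_3,\ \zeta_1^{-1}+\zeta_2^{-1}+\zeta_3^{-1})=(e_1,e_2)$, where $\zeta_1,\zeta_2,\zeta_3$ are roots of unity with $\zeta_1\zeta_2\zeta_3=1$ and $e_1,e_2$ are their first two elementary symmetric functions (so $\zeta_1^{-1}+\zeta_2^{-1}+\zeta_3^{-1}=e_2$ because $\zeta_1\zeta_2\zeta_3=1$); thus the fixed point is equivalent to the monic cubic $h(T)=T^3-e_1T^2+e_2T-1=(T-\zeta_1)(T-\zeta_2)(T-\zeta_3)$, equivalently to the unordered triple $\{\zeta_1,\zeta_2,\zeta_3\}$. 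Reading off the denominators in Theorem~\ref{fixed}, all $\zeta_i$ that occur are $N$-th roots of unity with $N:=\operatorname{lcm}(q-1,q^2-1,q^2+q+1)=\operatorname{lcm}(q^2-1,q^2+q+1)$, and $p\nmid N$ since $q\equiv 0\pmod p$ forces $q^2-1\equiv-1$ and $q^2+q+1\equiv 1\pmod p$.

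To see injectivity, suppose two fixed points reduce to the same element modulo $\mf{P}$. Extending $\mf{P}$ to a prime $\mf{Q}$ of $L:=\Q(\zeta_N)$ and using $\mathcal{O}_K/\mf{P}\hookrightarrow\mathcal{O}_L/\mf{Q}$, the two associated cubics become congruent modulo $\mf{Q}$, so their root multisets coincide in $\mathcal{O}_L/\mf{Q}$: $\{\overline{\zeta_1},\overline{\zeta_2},\overline{\zeta_3}\}=\{\overline{\zeta_1'},\overline{\zeta_2'},\overline{\zeta_3'}\}$. The final step is that reduction modulo $\mf{Q}$ is injective on $\mu_N$: if $\zeta/\zeta'$ were a primitive $d$-th root of unity with $d\mid N$, $d>1$, and $\zeta\equiv\zeta'\pmod{\mf{Q}}$, then $1-\zeta/\zeta'$ would divide $d$ in $\mathcal{O}_L$ (because $\prod_{j=1}^{d-1}(1-\omega^j)=d$ for a primitive $d$-th root of unity $\omega$), forcing $p\mid d$ and contradicting $p\nmid N$. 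Hence $\{\zeta_1,\zeta_2,\zeta_3\}=\{\zeta_1',\zeta_2',\zeta_3'\}$ and the two fixed points coincide. I expect this last step — ensuring that the (finitely many, explicitly listed) complex fixed points stay distinct after reduction modulo $\mf{P}$ — to be the only real obstacle; it works precisely because the denominators in Theorem~\ref{fixed} are prime to $p$, and once it is in place the equality $q^2=|\F_q^2|$ completes the argument.
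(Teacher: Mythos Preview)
Your proof is correct and follows the same line as the paper: use the congruence $\mc{T}_q(x,y)\equiv(x^q,y^q)\pmod p$ to see that reduction lands in $\F_q^2$, and combine this with the count $|\Fix(\mc{T}_q,\C^2)|=q^2$ coming from Theorem~\ref{fixed}. The paper states the lemma immediately after that observation without a separate proof, so your explicit injectivity argument---packaging a fixed point as the cubic $T^3-xT^2+\bar{x}T-1$ and using that reduction modulo $\mf{Q}$ is injective on $\mu_N$ because $p\nmid N$---is precisely the step the paper leaves implicit, and it is carried out correctly.
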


After characterizing the elements in $\F_q^2$ in a compatible fashion under the 
action of $\mc{T}_q$, determining the cardinality of $V(\mc{T}_k,\F_q^2)$ 
reduces 
to a combinatoric argument.  This is the idea we have used in order to compute 
the size of the value sets for Latt\`{e}s maps \cite{kucuksakalli}.

\begin{theorem}\label{main}
Let $k$ be a nonzero integer and let $\F_q$ be a finite field of characteristic 
$p$. Set
\[a=\frac{q-1}{\gcd(k,q-1)},\ \ b=\frac{q^2-1}{\gcd(k,q^2-1)}\ \ \textnormal{ 
and }\ \ c=\frac{q^2+q+1}{\gcd(k,q^2+q+1)}.\] 
Then the cardinality of the value set is 
\[|V(\mc{T}_k,\F_q^2)|=\frac{a^2}{6}+\frac{b}{2}+\frac{c}{3}+\eta(k,q)\]
where $\eta(k,q)$ is given by
\[\begin{array}{c|c|c}
\eta(k,q) & 3\nmid k \textnormal{ or } 3\nmid a & 3|k \textnormal{ and } 3|a\\ 
\hline
2 \nmid k \textnormal{ or } 2\nmid b & 0 & 2/3\\ \hline
 2|k \textnormal{ and } 2| b & a/2 & a/2+2/3
 \end{array}\]
In particular if $\gcd(k,6)=1$, then $\eta(k,q)=0$.
\end{theorem}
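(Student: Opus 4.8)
The plan is to transport the count to the complex fixed points of $\mc{T}_q$ and then run an inclusion–exclusion over explicit parametrizations. By the preceding lemma, reduction modulo a prime $\mf{P}$ of $K=\Q(\Fix(\mc{T}_q,\C^2))$ above $p$ gives a bijection $\phi\colon\Fix(\mc{T}_q,\C^2)\to\F_q^2$. Since the coordinates of any fixed point of $\mc{T}_q$ are sums of roots of unity, hence algebraic integers, and $g_k\in\Z[x,y]$, the bijection $\phi$ intertwines the action of $\mc{T}_k$ on $\Fix(\mc{T}_q,\C^2)$ with that of $\mc{T}_k$ on $\F_q^2$; and $\mc{T}_k$ does map $\Fix(\mc{T}_q,\C^2)$ into itself because $\mc{T}_k\circ\mc{T}_q=\mc{T}_q\circ\mc{T}_k$ on $\{(x,\bar x):x\in S\}$. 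Hence $|V(\mc{T}_k,\F_q^2)|=|\mc{T}_k(\Fix(\mc{T}_q,\C^2))|$, and combining Theorem~\ref{fixed} with the identity $\mc{T}_k(\alpha(\sigma,\tau),\overline{\alpha(\sigma,\tau)})=(\alpha(k\sigma,k\tau),\overline{\alpha(k\sigma,k\tau)})$ and the observation that $\{kd/(q-1):d\in\Z\}=\tfrac1a\Z/\Z$ (and likewise with $q^2-1$, $q^2+q+1$ in place of $q-1$), this is the cardinality of $\mc{A}'\cup\mc{B}'\cup\mc{C}'$, where $\mc{A}'=\{\alpha(d/a,e/a):d,e\in\Z\}$, $\mc{B}'=\{\alpha(d/b,dq/b):d\in\Z\}$, and $\mc{C}'=\{\alpha(d/c,dq/c):d\in\Z\}$.

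Next I would count the three pieces using Lemma~\ref{equal}. Points of $\mc{A}'$ correspond bijectively to $S_3$-orbits of triples $(u,v,w)\in(\Z/a)^3$ with $u+v+w\equiv 0$, so Burnside's lemma gives $|\mc{A}'|=\tfrac16\bigl(a^2+3a+2\gcd(a,3)\bigr)$. For $\mc{B}'$, note $b\mid q^2-1$, so $q$ is a unit modulo $b$ with $q^2\equiv 1$; the relation $\alpha(d/b,dq/b)=\alpha(d'/b,d'q/b)$ is equivalent to $d'\in\{d,dq\}$, the only further coincidences being on the boundary of $S$ and forcing $dq\equiv d$ (seen by multiplying the degeneracy relations by suitable factors and using $q^2\equiv 1$), so $|\mc{B}'|$ is the number of $\langle q\rangle$-orbits on $\Z/b$, namely $\tfrac12\bigl(b+\gcd(b,q-1)\bigr)$. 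For $\mc{C}'$, since $c\mid q^2+q+1$ we have $-d(q+1)\equiv dq^2\pmod c$, so $\{d,dq,-d(q+1)\}$ is the $\langle q\rangle$-orbit of $d$ and $|\mc{C}'|=\tfrac13\bigl(c+2\gcd(c,q-1)\bigr)$.

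Then I would compute the intersections. A point $\alpha(d/b,dq/b)$ of $\mc{B}'$ lies in $\mc{A}'$ iff $b\mid ad$; since $a\mid q-1$ such $d$ automatically satisfy $dq\equiv d\pmod b$, and the resulting $\gcd(a,b)$ points are distinct, so $|\mc{A}'\cap\mc{B}'|=\gcd(a,b)=a$ (as $a\mid b$). Using $\gcd(q-1,q^2+q+1)\mid 3$ and $\gcd(q^2-1,q^2+q+1)\mid 3$, every element of $\mc{A}'\cap\mc{C}'$, of $\mc{B}'\cap\mc{C}'$, or of $\mc{A}'\cap\mc{B}'\cap\mc{C}'$ has all three of $\sigma,\tau,-(\sigma+\tau)$ with denominator dividing $3$; a direct check then gives $|\mc{A}'\cap\mc{C}'|=|\mc{B}'\cap\mc{C}'|=|\mc{A}'\cap\mc{B}'\cap\mc{C}'|=\gcd(a,c)$. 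Inclusion–exclusion now yields
\[|V(\mc{T}_k,\F_q^2)|=\frac{a^2}{6}+\frac{b}{2}+\frac{c}{3}+\frac{\gcd(b,q-1)-a}{2}+\left(\frac{\gcd(a,3)}{3}+\frac{2\gcd(c,q-1)}{3}-\gcd(b,c)\right),\]
so it remains to identify the last two summands with $\eta(k,q)$.

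This final identification is where I expect the main difficulty to sit: it is a finite case analysis on the $2$-adic and $3$-adic valuations of $k$ relative to those of $q-1$, $q+1$, and $q^2+q+1$. On the one hand, $\gcd(b,q-1)$ equals $a$, except that its $2$-part is doubled exactly when $2\mid k$ and $2\mid b$, so that $\tfrac12(\gcd(b,q-1)-a)$ equals $0$ or $a/2$ as in the table. On the other hand, the quantity $\tfrac13\gcd(a,3)+\tfrac23\gcd(c,q-1)-\gcd(b,c)$ works out to be $2/3$ precisely when $3\mid k$ and $3\mid a$, and $0$ otherwise. Since the two corrections depend on disjoint sets of primes they combine multiplicatively into the displayed $2\times 2$ array, and both vanish when $\gcd(k,6)=1$. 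The one further point requiring care throughout is that the degenerate coincidences allowed by Lemma~\ref{equal} — points on the hypocycloid and the three corner points — are booked correctly in the orbit counts for $\mc{B}'$ and $\mc{C}'$ and in all the intersection counts.
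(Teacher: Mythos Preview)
Your proof is correct and follows essentially the same route as the paper: transport to $\Fix(\mc{T}_q,\C^2)$ via the reduction bijection, identify $\mc{T}_k(\Fix(\mc{T}_q,\C^2))$ with $\mc{A}'\cup\mc{B}'\cup\mc{C}'$, count each piece and their pairwise and triple intersections, apply inclusion--exclusion, and finish with a $2$- and $3$-adic case analysis to extract $\eta(k,q)$. Your explicit use of Burnside for $|\mc{A}'|$ and $|\mc{C}'|$ and the early simplification $\gcd(a,b)=a$ (valid since $a\mid b$ prime by prime) make the bookkeeping a little cleaner than the paper's presentation, but the argument is the same.
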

\begin{proof} We have $\Fix(\mc{T}_q, \C^2) = \{ (x, \bar{x}): x\in 
\mc{A}_q\cup\mc{B}_q\cup\mc{C}_q \}$ and there is a one-to-one correspondence 
between $\Fix(\mc{T}_q,\C^2)$ and $\F_q^2$. There will be three types of 
elements $\mc{T}_k(x,y)$ in the value set $V(\mc{T}_k, \F_q^2)$ depending on $x$ 
being in $\bar{\mc{A}}_q, \bar{\mc{B}}_q$ and $\bar{\mc{C}}_q$. We will refer to 
these elements as Type-I, Type-II and Type-III, respectively. The proof of the 
case $k=1$ is similar to the proof of Theorem~\ref{fixed}. Other cases require a 
more detailed investigation. For each type we give the form of $x$ and the 
number of elements in that type by the following table:
\begin{center}
\begin{tabular}{c|c|c}
Type-I & $\alpha\left(\frac{d}{a}, \frac{e}{a}\right)$ & 
$\frac{a^2+3a+2\gcd(a,3)}{6}$\\ \hline
Type-II & $\alpha\left(\frac{d}{b}, \frac{dq}{b}\right)$ & 
$\frac{b+\gcd(b,q-1)}{2}$\\ \hline
Type-III & $\alpha\left(\frac{d}{c}, \frac{dq}{c}\right)$ & 
$\frac{c+2\gcd(c,3)}{3}$ \\ 
\end{tabular}
\end{center}
The number of elements which fit into different types are given by the following 
table:
\begin{center}
\begin{tabular}{c|c|c|c}
Type-I\&II & Type-I\&III & Type-II\&III & Type-I\&II\&III\\ \hline
$\gcd(a,b)$ & $\gcd(a,c)$ & $\gcd(b,c)$ & $\gcd(a,b,c)$
\end{tabular}
\end{center}
Applying the inclusion and exclusion principle we see that the cardinality 
of the value set $V(\mc{T}_k,\F_q^2)$ can be written as
\begin{align*}
 |V(\mc{T}_k,\F_q^2)|=& \left(\frac{a^2}{6}+\frac{b}{2}+\frac{c}{3}\right) + 
\left(\frac{a}{2}+\frac{\gcd(b,q-1)}{2}-\gcd(a,b)  \right) \\
& +\left( \frac{\gcd(a,3)+2\gcd(c,3)}{3}- \gcd(a,c)-\gcd(b,c)+\gcd(a,b,c) 
\right).
\end{align*}
The second term in the above sum is $0$ unless $2|b$ and $2|k$. To see this 
note that if $2|b$ and $2|k$, then $\frac{\gcd(b,q-1)}{2}=\frac{a}{2}$ and 
$\gcd(a,b)=\frac{a}{2}$. If $2\nmid b$ or $2\nmid k$ then $\gcd(a,b)$ 
becomes $a$. A case by case investigation shows that the third term is $0$ 
unless $3|k$ and $3 |a$. If $3|k$ and $3|a$, then each greatest common divisor 
appearing in the third term is equal to $1$ except $\gcd(a,3)=3$. Thus the third 
term of the sum becomes $2/3$.
\end{proof}

We recover the result of Lidl and Wells for bivariate Chebyshev maps by 
Theorem~\ref{main}. More precisely we have a sufficient and necessary condition 
for bivariate Chebyshev maps for being a permutation of $\F_q^2$.

\begin{corollary}\label{maincor}
 The bivariate Chebyshev map $\mc{T}_k(x,y)$ induces a permutation of $\F_q^2$ 
if and only $\gcd(k,q^s-1)=1$ for $s=1,2,3$.
\end{corollary}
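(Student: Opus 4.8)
The plan is to deduce Corollary~\ref{maincor} directly from the formula in Theorem~\ref{main}. Recall that $\mc{T}_k$ is a permutation of $\F_q^2$ precisely when $|V(\mc{T}_k,\F_q^2)|=q^2$, so the task is to show that the expression $\frac{a^2}{6}+\frac{b}{2}+\frac{c}{3}+\eta(k,q)$ equals $q^2$ if and only if $\gcd(k,q^s-1)=1$ for $s=1,2,3$.

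First I would handle the ``if'' direction, which is the easy half. Assume $\gcd(k,q-1)=\gcd(k,q^2-1)=\gcd(k,q^3-1)=1$. Since $q^3-1=(q-1)(q^2+q+1)$, coprimality with $q^3-1$ gives $\gcd(k,q^2+q+1)=1$ as well. Hence $a=q-1$, $b=q^2-1$ and $c=q^2+q+1$. Also $\gcd(k,q^2-1)=1$ forces $\gcd(k,6)=1$ (as $6\mid q^2-1$ whenever $\gcd(q,6)=1$, and the characteristic-$2$ and characteristic-$3$ cases must be checked separately but $\gcd(k,q-1)=1$ and $\gcd(k,q+1)=1$ cover them), so by the last sentence of Theorem~\ref{main} we get $\eta(k,q)=0$. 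Then a direct computation
\[
\frac{(q-1)^2}{6}+\frac{q^2-1}{2}+\frac{q^2+q+1}{3}
=\frac{(q-1)^2+3(q^2-1)+2(q^2+q+1)}{6}=\frac{6q^2}{6}=q^2
\]
finishes this direction.

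For the ``only if'' direction, I would argue by contraposition: suppose $\gcd(k,q^s-1)>1$ for some $s\in\{1,2,3\}$, and show $|V(\mc{T}_k,\F_q^2)|<q^2$. The natural route is to observe that each of $a$, $b$, $c$ individually satisfies $a\le q-1$, $b\le q^2-1$, $c\le q^2+q+1$, and these are strict inequalities exactly when $\gcd(k,q-1)$, $\gcd(k,q^2-1)$, $\gcd(k,q^2+q+1)$ respectively exceeds $1$. Since $a\mid q-1$, a strict inequality means $a\le(q-1)/2$, which drops $\frac{a^2}{6}$ by a factor of at least $4$; similarly $b\le(q^2-1)/2$ halves $\frac{b}{2}$, and $c\le(q^2+q+1)/p$ shrinks $\frac{c}{3}$. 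One must then check that the largest possible value of $\eta(k,q)$, namely $a/2+2/3$, cannot compensate for this loss. The cleanest way is to bound: whenever any one of the three quantities is strictly reduced, the main sum $\frac{a^2}{6}+\frac{b}{2}+\frac{c}{3}$ falls below $q^2-\frac{a}{2}-\frac{2}{3}$ (since the defect in any single term already exceeds $\frac{a}{2}+\frac{2}{3}$ for $q$ at all but the smallest values, which are handled by inspection), hence $|V|<q^2$. The main obstacle I anticipate is precisely this bookkeeping with $\eta(k,q)$ for small $q$: one has to verify that the correction term genuinely never closes the gap, possibly treating $q\in\{2,3,4,5\}$ by hand, since for those values $a/2$ is a non-negligible fraction of $q^2$. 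Once the strict inequality $|V(\mc{T}_k,\F_q^2)|<q^2$ is established in every case where some $\gcd(k,q^s-1)>1$, combining with the ``if'' direction yields the corollary.
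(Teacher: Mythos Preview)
Your approach matches the paper's: the paper does not give an explicit proof of the corollary, merely stating that it follows from Theorem~\ref{main}, and your proposal is precisely the natural way to make that implication explicit.

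Two small points. In the ``if'' direction, your claim that the hypotheses force $\gcd(k,6)=1$ is not quite right in characteristic~$3$: for instance $k=3$, $q=3$ satisfies $\gcd(k,q^s-1)=1$ for $s=1,2,3$ yet $3\mid k$. The conclusion $\eta(k,q)=0$ still holds, but you should read it off the table directly (here $3\nmid a=q-1$ puts you in the first column) rather than via $\gcd(k,6)=1$. In the ``only if'' direction your sketch is sound; the case analysis does go through, and the small-$q$ worry largely evaporates once you note that $\gcd(k,q-1)>1$ is impossible for $q=2$, and that $\gcd(k,q-1)>1$ automatically forces $\gcd(k,q^2-1)>1$ as well, so the defects in the $a^2/6$ and $b/2$ terms occur together.
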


We finish our paper by giving an example to illustrate the invariants  
introduced in Theorem~\ref{main}.

\begin{example}
Let $k=6^i$ with $i=0,1,2,\ldots$ and consider the bivariate Chebyshev map 
$\mc{T}_{6^i}$ on $\F_{73}^2$. We have
$\mc{T}_6(x,y)=(g_6(x,y),g_6(y,x))$ where
\[g_6(x,y)=x^6 - 6yx^4 + 9y^2x^2 + 6x^3  - 2y^3 - 12yx  + 3.\]
The maps $\mc{T}_{6^i}$ are not bijections of $\F_{73}^2$ for $i=1, 2, 3, 
\ldots$ since $6$ is not relatively prime to $73-1,73^2-1$ and $73^3-1$. We find 
the cardinality of the value set by using Theorem~\ref{main} and obtain the 
following table.
\[\begin{array}{c|c|c|c|c|c|c|c} 
k & 6^0 & 6^1 & 6^2 & 6^3 & 6^4 & 6^5 & \ldots\\ \hline
a & 72 & 12 & 2 & 1 & 1 & 1 & \ldots\\ \hline
b & 5328 & 888 & 148 & 74 & 37 & 37 & \ldots\\ \hline
c & 5403 & 1801 & 1801 & 1801 & 1801 & 1801 & \ldots\\ \hline
\eta(k,73) & 0 & 12/2+2/3 & 2/2 & 1/2 & 0 & 0 & \ldots\\ \hline
|V(\mc{T}_k,\F_{73}^2)| & 5329 & 1075 & 676 & 638 & 619 & 619 & \ldots 
 \end{array}\]
Note that the size of the value set of $\mc{T}_{6^i}$ will be $619$ for $i \geq 
4$ since $a,b$ and $c$ are relatively prime to $6$ from that point on.
 \end{example}

{\small
\def\refname{References}
\newcommand{\etalchar}[1]{$^{#1}$}

\end{document}